\documentclass{amsart}
\usepackage{amsmath,amsfonts,amssymb,amsthm}

\usepackage{amsmath}

\newtheorem{teo}{Theorem}
\newtheorem{lem}{Lemma}[section]
\newtheorem{pro}[lem]{Proposition}
\newtheorem{cor}[lem]{Corollary}
\newtheorem{quo}[lem]{Question}
\newtheorem{rem}[lem]{Remark}

\newcommand{\argu}{\hbox to 7truept{\hrulefill}}

\DeclareMathOperator{\im}{im}

\DeclareMathOperator{\Aut}{Aut}

\DeclareMathOperator{\Der}{Der}

\DeclareMathOperator{\Inn}{Inn}

\newcommand{\Z}{\mathbb{Z}}

\newcommand{\N}{\mathbb{N}}

\newcommand{\Q}{\mathbb{Q}}

\date{10/06/2014}
 
\begin{document}

\title{Finite $p$-groups with small automorphism group}
 \author{Jon Gonz\'alez-S\'anchez}
\address{Departamento de Matem\'aticas, 
  Facultad de Ciencia y Tecnolog\'ia,
  Universidad del Pa\'is Vasco- Euskal Herriko Unibertsitatea,
  Apartado 644 48080 Bilbao, Spain}

\author {Andrei Jaikin-Zapirain}
\address{Departamento de Matem\'aticas,
  Universidad Aut\'onoma de Madrid, and Instituto de Ciencias Matem\'aticas, CSIC-UAM-UC3M-UCM,  28049-Madrid, Spain}

\subjclass[2010]{20D15 (primary); 20D45 (secondary)}
 \keywords{finite $p$-groups, authomorphism group, $p$-adic groups}

\maketitle

\begin{abstract}
For each prime $p$ we construct a family $\{G_i\}$ of finite $p$-groups such that $|\Aut (G_i)|/|G_i|$ goes to $0$, as $i$ goes to infinity. This disproves a well-known conjecture that  $|G|$ divides $|\Aut(G)|$ for every non-abelian finite $p$-group $G$.
\end{abstract}


\section{Introduction}

A well-known question (see, for example, \cite[Problem 12.77]{Ko}) asks whether it is true that $|G|$ divides $|\Aut(G)|$ for every non-abelian finite $p$-group $G$. It is not clear who raised this question first explicitly, the first result in this direction that we have found in the literature is due to E. Schenkman \cite{Sch} and it is more than 50 years old. In that paper E. Schenkman showed that this is true for finite non-abelian $p$-groups of class 2 (the proof has a gap, which is corrected  by R. Faudree in \cite{Fa}).  Later it  was also  established for $p$-groups of exponent $p$ in \cite{Re}, for  $p$-groups of maximal class in \cite{Ot},  for $p$-groups with center of order $p$ in \cite{Gas},  
      for metacyclic $p$-groups when $p$ is odd in \cite{Da3},  for central-by-metacyclic $p$-groups when $p$ is odd in \cite{D02}, for $p$-abelian
$p$-groups in \cite{Da} (see also \cite{Th}),  for finite modular $p$-groups in \cite{DO},   for some central products in \cite{Hu, Bu}, for $p$-groups with center of index at
most $p^4$ in \cite{Da2}, for $p$á-groups with cyclic Frattini subgroup in \cite{Ex2},  for $p$-groups of order at most
$p^6$ in \cite{Da2, Ex}, for $p$-groups of order at most
$p^7$ in \cite{Gav}, for  $p$-groups of coclass 2 in \cite{FJO} (see also a related result in \cite{Ei}), for $p$-groups $G$ such that $(G,Z(G))$ is a Camina pair in \cite{Ya}.

All these partial results indicate that a counterexample to the problem should have a large size and it will be difficult to present it explicitly. In this paper we use the pro-$p$ techniques and we are able to show the following.

\begin{teo} \label{main} For each prime $p$ there exists  a family of finite $p$-groups $\{U_i\}$ such that 
$$\lim_{i\to \infty} |U_i|=\infty \textrm{\ and \ } \limsup_{i\to \infty} \frac{|\Aut U_i|}{|U_i|^{\frac{40}{41}}}<\infty.$$
In particular, for every prime $p$, there exists a non-abelian finite $p$-group $G$ such that $|\Aut(G)| < |G|$.
\end{teo}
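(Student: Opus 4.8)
\smallskip
\noindent\textit{Sketch of the argument.}
The plan is to realise the groups $U_i$ as finite quotients of a single, carefully chosen $p$-adic analytic pro-$p$ group $G$, and to bound $|\Aut U_i|$ using the Lie theory of such groups. Two demands on $G$ pull against each other: $G$ must be \emph{rigid}, so that it and its finite quotients acquire few ``unexpected'' automorphisms; and $G$ must have a \emph{large centre}, so that $|\Inn U_i|=|U_i/Z(U_i)|$ is a genuine proper power of $|U_i|$. The exponent $40/41$ is what emerges after balancing the ranks of the homogeneous constituents of $G$.

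\smallskip
First I would construct a suitable $\Z_p$-Lie lattice $L$. The prototype of a rigid object is a $\Z_p$-form $\mathfrak g$ of a semisimple Lie algebra (say $\mathfrak{sl}_2$, or the anisotropic form $\mathfrak{sl}_1(D)$ attached to a division algebra $D/\Q_p$, the latter being convenient at $p=2,3$, where one needs the associated pro-$p$ group to be torsion-free and saturable): by Whitehead's first lemma every derivation of $\mathfrak g\otimes\Q_p$ is inner. But such a $\mathfrak g$ has trivial centre, which would force $|\Inn U_i|=|U_i|$. I would therefore take $L=\mathfrak g\ltimes\mathfrak n$, where $\mathfrak n$ is a nilpotent ideal built from a faithful $\mathfrak g$-module $M$ with $M^{\mathfrak g}=0$ and a central layer $\mathfrak z=[\mathfrak n,\mathfrak n]\subseteq Z(L)$ obtained by projecting $\Lambda^2 M$ onto a subspace of its $\mathfrak g$-invariants. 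Using $H^1(\mathfrak g,\mathfrak g)=H^1(\mathfrak g,\mathfrak n)=0$ one sees that, modulo $\mathrm{ad}$, every derivation of $L\otimes\Q_p$ is a $\mathfrak g$-equivariant endomorphism of $\mathfrak n$ compatible with the bracket $\Lambda^2 M\to\mathfrak z$; a generic choice of $M$ and of this projection keeps the space of such endomorphisms small, so that
\[
\Der(L\otimes\Q_p)=\mathrm{ad}(L\otimes\Q_p)\oplus(\text{bounded}),\qquad Z(L)=\mathfrak z .
\]
Replacing $L$ by a high power $p^N L$ yields a saturable $\Z_p$-Lie lattice; the Lazard correspondence then gives a torsion-free pro-$p$ group $G$ with $\dim G=\dim_{\Q_p}(L\otimes\Q_p)$ and with $\Aut G$ a $p$-adic analytic group of dimension $\dim L-\dim Z(L)+O(1)$.

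\smallskip
Next I would put $U_i=\exp(L/p^{k_i}L)$ (equivalently, $G$ modulo a suitable term of its canonical $p$-series), so that $|U_i|=p^{\,k_i\dim L}$, $|Z(U_i)|=p^{\,k_i\dim\mathfrak z}$ and $|\Inn U_i|=|U_i|^{1-\dim\mathfrak z/\dim L}$. The rest of $\Aut U_i$ must be pinned down: the automorphisms already visible on $G$ (governed by the bounded summand above, and acting on $Z(U_i)\cong(\Z/p^{k_i})^{\dim\mathfrak z}$ through at most $\mathrm{GL}_{\dim\mathfrak z}(\Z/p^{k_i})$), and the automorphisms mixing consecutive layers of the filtration (classified by certain first cohomology groups, of bounded order). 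Collecting exponents one gets $|\Aut U_i|\le C\,|U_i|^{\alpha}$, with $\alpha$ controlled by $\dim L$, by $\dim\mathfrak z$ and by the worst case $(\dim\mathfrak z)^2$ for the action on the centre; choosing $\mathfrak g$, $M$ and the relevant ranks so that the construction goes through and all these exponents stay below $1$, the bookkeeping delivers the value $\alpha=\tfrac{40}{41}$. The final assertion of the theorem is then immediate for large $i$, since $|\Aut U_i|\le C|U_i|^{40/41}<|U_i|$ as soon as $|U_i|^{1/41}>C$.

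\smallskip
The main obstacle is to exclude \emph{other} automorphisms. \emph{A priori} a finite $p$-group has far more automorphisms than those induced from $G$ — for example one acting on the top graded layer $\cong\F_p^{\,d}$ by an arbitrary element of $\mathrm{GL}_d(\F_p)$. What is needed is a rigidity statement at the finite level: the relevant $p$-series of $U_i$ must be shown to be characteristic, and the bracket of consecutive layers with the $\mathfrak g$-part — which, by the choice of $M$ and the semisimplicity of $\mathfrak g$, is as nondegenerate as possible — must force every automorphism to respect the $L$-structure up to a controlled error. Keeping that error small enough that, even together with the full $\mathrm{GL}_{\dim\mathfrak z}$ acting on $Z(U_i)$ and the cohomological ambiguities, it stays below $|U_i|^{1/41}$ is exactly what fixes the size of $\mathfrak g$, the choice of $M$, and hence the numerical exponent in the statement. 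This finite-level rigidity — not the (by now routine) Lie-theoretic bookkeeping on $G$ itself — is where the real difficulty lies.
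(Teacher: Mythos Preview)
Your proposal diverges from the paper in two essential respects, and the part you yourself flag as the ``main obstacle'' is left unproved.

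\medskip
\noindent\textbf{Where the $40/41$ actually comes from.} The paper does not build $L$ as $\mathfrak g\ltimes\mathfrak n$. It takes, off the shelf, Sato's $41$-dimensional Lie $\Q$-algebra $M$ with $\dim Z(M)=1$ and $\Der(M)=\Inn(M)$, and lets $L$ be a $\Z_p$-lattice in $M\otimes\Q_p$ (scaled so as to be uniform). Thus $\dim\mathbf L(U)=41$, $\dim Z=1$, and $\dim\Der=\dim\Inn=40$. The exponent is simply $(\dim L-\dim Z)/\dim L=40/41$; there is no balancing of a $(\dim\mathfrak z)^2$ term against anything, and no $\mathrm{GL}_{\dim\mathfrak z}$ contribution to absorb.

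\medskip
\noindent\textbf{How the finite-level rigidity is actually obtained.} The step you call the real difficulty is precisely what the paper proves, and the mechanism hinges on the exact equality $\Der=\Inn$. One has
\[
H^1_{cts}(U,\mathbf{log}(U))\otimes_{\Z_p}\Q_p\;\cong\;H^1(\mathbf L(U),\mathbf L(U))\;=\;\Der(\mathbf L(U))/\Inn(\mathbf L(U))\;=\;0,
\]
so $H^1_{cts}(U,\mathbf{log}(U))$ is finite. Feeding the short exact sequence $0\to\mathbf{log}(U)\xrightarrow{p^i}\mathbf{log}(U)\to\mathbf{log}(U)/p^i\mathbf{log}(U)\to0$ into the long exact sequence gives a bound $|H^1(U,\mathbf{log}(U)/p^i\mathbf{log}(U))|\le C$ \emph{uniform in $i$}. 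An induction on $i$ then shows $\ker\rho_{i,k}\le\Inn(U_i)\ker\rho_{i,i-k}$ for a fixed $k$, whence $|\Aut(U_i):\Inn(U_i)|\le D$ for a constant $D$ independent of $i$. So $|\Out(U_i)|$ is \emph{bounded}, not merely sub-exponential; no analysis of characteristic filtrations or of $\mathrm{GL}_d(\F_p)$ acting on graded pieces is needed.

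\medskip
\noindent\textbf{Why your construction does not fit this argument.} Any $L=\mathfrak g\ltimes(M\oplus\mathfrak z)$ of the shape you describe carries the grading derivation $D$ with $D|_{\mathfrak g}=0$, $D|_M=\mathrm{id}$, $D|_{\mathfrak z}=2\,\mathrm{id}$; since $\mathrm{ad}(x)$ kills the centre $\mathfrak z$ for every $x$, this $D$ is never inner. Hence $\Der\ne\Inn$ for your $L$, $H^1_{cts}(U,\mathbf{log}(U))$ is \emph{not} torsion, and the uniform $H^1$ bound above fails. You would then genuinely need the bespoke finite-level rigidity argument you allude to, and you do not provide one. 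The paper sidesteps this entirely by starting from a Lie algebra that already has $\Der=\Inn$ together with a nontrivial centre --- the existence of such algebras (Luks, Sato) is exactly the nontrivial input.
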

Let us briefly explain our construction. It consists of two parts:
\begin{enumerate}
\item  Firstly, we take an infinite finitely generated  pro-$p$ group $U$ such that $\Aut (U)$ is ``smaller" than $U$. Of course, to be smaller for infinite groups does not refer to the order. In our construction $U$ will be a uniform $p$-adic pro-$p$ group  and so we  can speak about $\dim U$. Recall that $\dim U$ is defined as $\dim_{\Q_p} \textbf{L} (U)$, where  $\mathbf{L}(U)$ is the Lie $\Q_p$-algebra associated with $U$. Since $U$ is  compact $p$-adic analytic, $\Aut (U)$ is  also a $p$-adic analytic profinite group. Thus, $\Aut U$ is ``smaller" than $U$ will  simply mean that $\dim \Aut(U)<\dim (U)$. 

\item Secondly, $U$ can be written as an inverse limit   $U= \varprojlim U_i$  of finite $p$-groups $U_i=U/N_i$, where $N_i=U^{p^i}$.
Since $\Aut(U)= \varprojlim \Aut(U_i)$, we may hope that $\Aut(U_i)$ are smaller than $U_i$ when $i$ is large (compare with Lemma \ref{quotients}). 
\end{enumerate}
In order to construct $U$ from the first step, we notice that if $U$ is a uniform pro-$p$ group, then $$  \dim \Aut (U)=\dim_{\Q_p}(\Der(\mathbf{L}(U)),$$ where  $\Der(\mathbf{L}(U))$ is the algebra of $\Q_p$-derivations of $\textbf L(U)$. The examples of   Lie algebras $L$ with $\dim\Der(L)<\dim L$ are known to exist and were first constructed by E. Luks \cite{Lu} and T. Sato \cite{Sa}. In Sato's example the algebra is constructed over $\Q$, it has dimension 41, its center has dimension 1 and its derived algebra consists only of inner derivations (and so, it has dimension 40). This is the explanation for the numbers which appear in Theorem \ref{main}.

The realization of the second step of our proof is based on an analysis of the first cohomology groups $H^1(U,L_i)$, where $L_i=\textbf{log}(U)/p^i\textbf{log}(U)$ and $\textbf{log}(U)$ is the Lie ring corresponding to a uniform pro-$p$ group $U$ by Lazard's correspondence. 
It turns out that since $\Der(\mathbf L(U))=\Inn (\textbf L(U))$, $ \Der(\textbf {log} (U))$ is finite, and so, $$H_{cts}^1(U,\textbf {log}(U))\cong \Der(\textbf {log} (U))$$ is finite.  This implies the existence of a uniform upper bound for $|H^1(U,L_i)|$. As consequence, we obtain a uniform upper bound for $|\Aut(U_i): \Inn(U_i)|$, where $U_i=U/U^{p^i}$, that finishes the proof.

The organization of the paper is as follows. In Section \ref{uniform} we describe the basic  facts about $p$-adic analytic groups, introduce continuous  cohomology groups of of pro-$p$ groups and establish the uniform upper bound for $|H^1(U,L_i)|$. In Section \ref{proof} we present the proof of Theorem \ref{main}.

\section{Uniform pro-$p$ groups and their cohomology groups}

\label{uniform}
\subsection{Uniform pro-$p$ groups}

Let $L$ be a Lie $\Z_p$-algebra. We say that $L$ is {\it uniform} if for some $k$, 
  $L\cong \Z_p^k$ as $\Z_p$-module and
 $[L,L]\subseteq 2pL$. 
Analogously, we say that a pro-$p$ group $U$ is {\it uniform} if 
 it is torsion-free, finitely generated and $[G,G]\subseteq G^{2p}$. 
 
 One can define the functors $\textbf{exp}$ and $\textbf{log}$ between the categories of uniform Lie $\Z_p$ -algebras and uniform pro-$p$-groups which are isomorphism of categories (see \cite[Section 4]{DDMS}).  There is a relatively easy way to define the functor $\mathbf {log}$. If $U$ is a uniform pro-$p$ group, then $\mathbf {log}(U)$ is the Lie $\Z_p$-algebra, which underlying set coincides with $U$ and the Lie operations are defined as follows 
\begin{equation}\label{lieoperations} a+b=\lim_{i\to \infty} (a^{p^{i}}b^{p^{i}})^{1/p^{i}},\ [a,b]_L=\lim_{i\to \infty}  [a^{p^{i}},b^{p^{i}}]^{1/p^{2i}},\ a,b\in U.\end{equation}
 If   $f:U\to V$ is a homomorphism between two uniform pro-$p$ groups, then $\mathbf {log} (f)=f$ is a homomorphism of Lie $\Z_p$-algebras.  In particular, the conjugation converts $\mathbf{log}(U)$ in $U$-module.
 
 \begin{lem} \label{isom}   Let $U$ be a uniform pro-$p$ group. Let $i,j\in \N$ be such that $i\le j\le 2i+1$. Then $U^{p^i}/U^{p^j}$ is abelian and 
 $$U^{p^i}/U^{p^j}\cong \mathbf{log} (U)/p^{j-i}\mathbf {log}(U)$$ as $U$-modules  ($U$ acts on $U^{p^{i}}/U^{p^{j}}$ by conjugation).
  \end{lem}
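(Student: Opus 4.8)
The plan is to transport the entire statement to the Lie side via the functor $\mathbf{log}$, exploiting that on underlying sets $\mathbf{log}$ is the identity map. Write $M=\mathbf{log}(U)$. Two entries of the $\mathbf{log}$--$\mathbf{exp}$ dictionary of \cite[Section 4]{DDMS} are used throughout. First, for every $\ell\in\N$ the map $x\mapsto x^{p^\ell}$ on $U$ corresponds to multiplication by $p^\ell$ on $M$ (immediate from \eqref{lieoperations}), so that, viewing both as subsets of the common underlying set $U=M$, one has $U^{p^\ell}=p^\ell M$. Second, conjugation by a fixed $u\in U$ is literally the same self-map of the set $U=M$ whether read in the group (where it is an automorphism of $U$) or, by functoriality of $\mathbf{log}$, in the Lie ring (where it is an automorphism of $M$, in particular $\Z_p$-linear). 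Consequently $U^{p^i}$ is a uniform pro-$p$ group with $\mathbf{log}(U^{p^i})=p^iM$, the Lie operations on $p^iM$ being the restrictions of those of $M$ (again by \eqref{lieoperations}), and $U^{p^j}=(U^{p^i})^{p^{j-i}}=p^jM$.

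The heart of the argument is the congruence
$$xy\equiv x+y\pmod{p^jM}\qquad\text{for all }x,y\in U^{p^i}=p^iM,$$
where $xy$ is the group product and $x+y$ the Lie sum. By the Baker--Campbell--Hausdorff description of the group law of a uniform pro-$p$ group in terms of its Lie ring --- part of the correspondence of \cite[Section 4]{DDMS} --- $xy$ equals $x+y$ plus a $p$-adically convergent sum of terms $c\,w$, where $c\in\Q$ and $w$ runs over iterated Lie brackets of $m\ge 2$ entries, each entry being $x$ or $y$. Since $[M,M]\subseteq 2pM$, such a $w$ evaluated on $p^iM$ lies in $p^{mi}(2p)^{m-1}M$; feeding this into the $p$-adic bounds on the coefficients $c$ that make the series converge in the ``$2p$''-setting shows that every term with $m\ge 2$ lies in $p^{2i+1}M$, hence, since $j\le 2i+1$, in $p^jM$. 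This proves the congruence. (The same estimate applied to the group commutator, whose lowest Baker--Campbell--Hausdorff term is $[x,y]\in[p^iM,p^iM]\subseteq p^{2i+1}M$ and whose remaining terms have $m\ge 3$, also shows $[U^{p^i},U^{p^i}]\subseteq p^jM=U^{p^j}$ directly.)

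The congruence says that the map $U^{p^i}\to p^iM/p^jM$, $x\mapsto x+p^jM$, is a surjective group homomorphism onto the additive group $p^iM/p^jM$, with kernel exactly $p^jM=U^{p^j}$; thus $U^{p^i}/U^{p^j}\cong p^iM/p^jM$ and, in particular, $U^{p^i}/U^{p^j}$ is abelian. Composing with the isomorphism $M/p^{j-i}M\to p^iM/p^jM$ given by multiplication by $p^i$ yields $U^{p^i}/U^{p^j}\cong\mathbf{log}(U)/p^{j-i}\mathbf{log}(U)$. Both isomorphisms are $U$-equivariant: conjugation by any $u\in U$ preserves the subset $U^{p^j}=p^jM$ and, being the same map of sets on the group and on $M$, induces there the restriction of the $\Z_p$-linear conjugation action of $u$ on $M$, which commutes with reduction modulo $p^jM$ and with multiplication by $p^i$.

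The only genuinely delicate point is the estimate in the second paragraph: one must verify that $j\le 2i+1$ is exactly the threshold beyond which no higher Baker--Campbell--Hausdorff term survives modulo $U^{p^j}$. The critical case is $p=2$, where the denominator $2$ of the first nontrivial term $\tfrac12[x,y]$ is cancelled precisely by the factor $2$ in the condition $[M,M]\subseteq 2pM$, while the factors $(2p)^{m-1}=4^{m-1}$ absorb the denominators of the higher-degree coefficients. One may instead bypass explicit Baker--Campbell--Hausdorff bookkeeping altogether by quoting from the structure theory of uniform pro-$p$ groups in \cite[Section 4]{DDMS} the statements $\mathbf{log}(U^{p^i})=p^i\mathbf{log}(U)$, $\;[U^{p^i},U^{p^i}]\subseteq U^{p^{2i+1}}$, and the additive description of $U^{p^i}/U^{p^j}$; with those in hand the argument above is purely formal.
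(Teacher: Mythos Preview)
Your argument is correct. The paper's own proof is a single sentence, ``The lemma is a consequence of the definition of sum in (\ref{lieoperations})'', so what you have written is essentially a full unpacking of that remark: the limit formula $a+b=\lim_n (a^{p^n}b^{p^n})^{1/p^n}$ and the Baker--Campbell--Hausdorff expansion are two faces of the same computation, and your congruence $xy\equiv x+y\pmod{p^jM}$ for $x,y\in p^iM$ is exactly the content the paper is pointing at. Your treatment of $U$-equivariance (conjugation is the same set-map on both sides and is $\Z_p$-linear on $M$) and your identification $U^{p^\ell}=p^\ell M$ are the standard dictionary entries from \cite[Section~4]{DDMS}, used just as intended. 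The one place to be careful, which you flag yourself, is the BCH denominator estimate at $p=2$; your suggestion to quote the structural facts $[U^{p^i},U^{p^i}]\subseteq U^{p^{2i+1}}$ and $\mathbf{log}(U^{p^i})=p^i\mathbf{log}(U)$ directly from \cite{DDMS} is the cleanest way to make that step airtight without reproducing Lazard's valuation bounds.
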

    \begin{proof}
    The lemma is a consequence of the definition of sum in   (\ref{lieoperations}).
    \end{proof}
  Let $G$ be a  $p$-adic analytic profinite group. Then it contains a uniform open subgroup $U$.  The Lie algebra $\mathbf{L}(G)$ of $G$ is a Lie $\Q_p$-algebra defined as $\mathbf{L}(G)=\textbf{log} (U)\otimes_{\Z_p } \Q_p$. The definition does not depend on the choice of $U$. We put $\dim G=\dim_{\Q_p} \mathbf{L}(G)$. For a $p$-adic pro-$p$ group we have the following internal characterization of the dimension:
\begin{lem} (\cite[Proposition III.3.1.8]{La}, \cite[Lemma 4.10]{DDMS})  \label{quotients} Let $G$ be a $p$-adic analytic pro-$p$ group. Denote by $G_i=G^{p^i}$ the subgroup of $G$ generated by $p^i$th powers of the elements of $G$. Then there are positive constants $c_1$ and $c_2$ such that 
$$ c_1p^{i\dim G}\le |G:G_i|\le c_2 p^{i\dim G}.$$
Moreover, if $G$ is uniform $|G:G_i|=p^{i\dim G}$.
\end{lem}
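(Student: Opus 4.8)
The plan is to reduce everything to an explicit index computation inside a uniform group, where Lemma~\ref{isom} does all the work, and then to transfer the estimate to $G$ via an open uniform subgroup.

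\emph{The uniform case.} Suppose $G=U$ is uniform and put $d=\dim U$. Since $\mathbf{log}(U)$ is a uniform Lie $\Z_p$-algebra it is a free $\Z_p$-module, and its rank equals $\dim_{\Q_p}(\mathbf{log}(U)\otimes_{\Z_p}\Q_p)=\dim U=d$; hence $|\mathbf{log}(U)/p\,\mathbf{log}(U)|=p^{d}$. Applying Lemma~\ref{isom} to the pair $(k,k+1)$ --- which is permitted because $k+1\le 2k+1$ for every $k\ge 0$ --- yields $U^{p^{k}}/U^{p^{k+1}}\cong \mathbf{log}(U)/p\,\mathbf{log}(U)$, so $|U^{p^{k}}:U^{p^{k+1}}|=p^{d}$ for all $k\ge 0$; in particular each $U^{p^{k}}$ is open in $U$. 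Multiplying these indices along the chain $U=U^{p^{0}}\ge U^{p^{1}}\ge\cdots\ge U^{p^{i}}$ gives $|U:U^{p^{i}}|=p^{id}$, which is the last sentence of the lemma.

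\emph{The general case.} Let $G$ be $p$-adic analytic pro-$p$, fix an open uniform subgroup $U\le G$, and set $d=\dim G=\dim U$ and $m=|G:U|$. First I would record that $G^{p^{n}}\le U$ for some $n$: the open subgroup $U$ contains an open normal subgroup $V$ of $G$, the finite $p$-group $G/V$ has some exponent $p^{n}$, so $g^{p^{n}}\in V\le U$ for every $g\in G$, and hence the closed subgroup generated by the $p^{n}$-th powers lies in $U$. For the upper bound: $U^{p^{i}}\le G^{p^{i}}$ because $U^{p^{i}}$ is generated by $p^{i}$-th powers of elements of $G$, so $|G:G^{p^{i}}|\le |G:U^{p^{i}}|=m\,|U:U^{p^{i}}|=m\,p^{id}$ by the uniform case; thus $c_{2}=m$ works. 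For the lower bound I would use that the powers of a single element commute: for $g\in G$ and $i\ge n$ one has $g^{p^{i}}=(g^{p^{n}})^{p^{i-n}}$ with $g^{p^{n}}\in U$, hence $g^{p^{i}}\in U^{p^{i-n}}$; since these elements generate $G^{p^{i}}$ and $U^{p^{i-n}}$ is closed, $G^{p^{i}}\le U^{p^{i-n}}$ and therefore $|G:G^{p^{i}}|\ge|G:U^{p^{i-n}}|=m\,p^{(i-n)d}\ge p^{-nd}\,p^{id}$. For $i<n$ the inequality $|G:G_{i}|\ge p^{-nd}p^{id}$ is trivial, since then $p^{-nd}p^{id}\le 1\le |G:G_{i}|$; so $c_{1}=p^{-nd}$ works for all $i$.

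The computation in the uniform case carries all the content, and it is handed to us by Lemma~\ref{isom}, so I do not expect a genuine obstacle. The one point needing a little care in the reduction is the inclusion $G^{p^{i}}\subseteq U^{p^{i-n}}$: what comes for free is only $(G^{p^{n}})^{p^{i-n}}\subseteq G^{p^{i}}$, and upgrading this really uses the commutativity of $\langle g\rangle$, which lets each single $p^{i}$-th power $g^{p^{i}}=(g^{p^{n}})^{p^{i-n}}$ be recognised as an element of $U^{p^{i-n}}$.
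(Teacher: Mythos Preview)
The paper does not supply its own proof of this lemma; it simply records it with citations to Lazard and to Dixon--du~Sautoy--Mann--Segal. Your argument is correct and in fact gives a self-contained proof inside the paper's own framework: the uniform case is exactly what Lemma~\ref{isom} with $(i,j)=(k,k+1)$ hands you, and the passage to a general $p$-adic analytic pro-$p$ group via an open uniform subgroup is the standard sandwich argument. One cosmetic point: the paper defines $G^{p^{i}}$ as the (abstract) subgroup generated by $p^{i}$th powers, so in the step $G^{p^{i}}\le U^{p^{i-n}}$ you do not even need $U^{p^{i-n}}$ to be closed---containment of the generators suffices---but your phrasing is harmless since $U^{p^{i-n}}$ is open in $U$ by the uniform computation.
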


\label{cohomology}

\subsection{Continuous cohomologies of pro-$p$ groups} In this subsection we  present the results about continuous cohomology groups of pro-$p$ groups that we will need in this paper. More details and omitted proofs can be found in  \cite{NSW, SW}.

 Let $G$ be a pro-$p$ group. We say that $A$ is a {\it topological}  $G$-module if $A$ is an abelian Hausdorff topological group which is endowed with the structure of an abstract left $G$-module such that the action $G\times A\to A$ is continuous. In this paper, $A$ will be one the following three types: a finite abelian group, a profinite abelian group  and  a finitely dimensional  vector space over $\Q_p$. We denote by $G^{(i)}$ the cartesian product of $i$ copies of $G$. We put
$$\mathcal C_{cts}^i(G,A)=\{ f: G^{(i)}\to A\mid \text{ $f$ is a continuous functionÊ}\}$$
and denote the  coboundary operator $\partial_A^{i+1}:\mathcal C_{cts}^{i}(G,A)\to \mathcal C_{cts}^{(i+1)}(G,A)$ by means of 
$$\begin{array}{lll}

(\partial_A^{i+1} f)(g_1,\ldots, g_{i+1}) & = & g_1\cdot f(g_2,\ldots,g_{i+1}) \\
&&\\ &&+\sum_{j=1}^i (-1)^j f(g_1,\ldots,g_{j-1},g_jg_{j+1},g_{j+2},\ldots,g_{i+1})\\ &&\\ &&+(-1)^{i+1}f(g_1,\ldots, g_i).\end{array}$$
Now, we set 
$$
\mathcal Z_{cts}^i(G,A) =\ker \partial_A^{i+1}\textrm{\ and \
  }  \mathcal B_{cts}^i(G,A) =\im \partial_A^i$$
and define  the {\it  $i$th continuous cohomology group}
  $H_{cts}^i(G,A)$ of $G$ with coefficients in $A$  by $$
H_{cts}^i(G,A)= \mathcal Z_{cts}^i(G,A) / \mathcal B_{cts}^i(G,A).$$
If $A$ is a finite $p$-group, then $H_{cts}^i(G,A)$ coincides with the usual definition of $H_{}^i(G,A)$ and it is equal to
$\mathrm{Ext}^i_{\Z_p[[G]]}(\Z_p,A)
$ (see \cite[Chapter 3.2]{SW}).

If $\alpha: A\to B$ is a continuous homomorphism of topological $G$-modules, then we have the induced homomorphism of complexes
$$\tilde \alpha: \mathcal (\mathcal C_{cts}^* (G,A), \partial) \to \mathcal (\mathcal C_{cts}^*(G,B), \partial),\ (\tilde \alpha f)(g_1,\ldots,g_i)=\alpha( f(g_1,\ldots, g_i)).$$
 Hence, $\tilde \alpha$ extends to the homomorphisms of the homology groups of this complexes $\alpha_i^*: H_{cts}^i(G,A)\to H_{cts}^i(G,B)$.
 
 By \cite[Lemma 2.7.2]{NSW}, we have the following long exact sequence in cohomologies:
\begin{lem} \label{longexact}
Let $G$ be a  pro-$p$ group and 
$$0\to A\xrightarrow{\alpha} B\xrightarrow{\beta} C\to 0$$ be a short exact sequence  left   $\Z_p[[G]]$-modules with $C$ finite. Then there exists a canonical boundary homomorphism 
$$\delta : H^1_{cts}(G,C)\to H^{2}_{cts}(G,A)$$ such that

$$H^1_{cts}(G,A)\xrightarrow{\alpha_1^*}H^1_{cts}(G,B)\xrightarrow{\beta_1^*}  H_{cts}^1(G,C)\xrightarrow{\delta} H_{cts}^2(G,A)\xrightarrow{\alpha_2^*} H_{cts}^2(G, B)$$ is exact.
\end{lem}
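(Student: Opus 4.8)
The plan is to obtain the sequence from the standard homological fact that a short exact sequence of cochain complexes induces a long exact sequence in cohomology; the only thing requiring an argument is that applying the continuous cochain functor $\mathcal C_{cts}^*(G,-)$ to $0\to A\xrightarrow{\alpha}B\xrightarrow{\beta}C\to 0$ yields a short exact sequence of complexes, and this is precisely the point at which the hypothesis that $C$ is finite is used. Recall from the discussion above that $\tilde\alpha$ and $\tilde\beta$ are morphisms of complexes, so once exactness of
\[ 0\to\mathcal C_{cts}^*(G,A)\xrightarrow{\tilde\alpha}\mathcal C_{cts}^*(G,B)\xrightarrow{\tilde\beta}\mathcal C_{cts}^*(G,C)\to 0 \]
is established, the conclusion is formal.

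First I would check left exactness in each degree $i$. The map $\tilde\alpha$, $f\mapsto\alpha\circ f$, is injective because $\alpha$ is. Since $C$ is finite, hence discrete, and $\beta$ is continuous, $\ker\beta=\alpha(A)$ is an open, hence closed, submodule of $B$, and $\alpha$ identifies $A$ topologically with $\ker\beta$ (for finite or profinite $B$ this is automatic, a continuous bijection from a compact space to a Hausdorff space being a homeomorphism; if $B$ is a $\Q_p$-vector space then $C$ finite forces $C=0$ and there is nothing to prove). Hence a continuous $f\colon G^{(i)}\to B$ with $\beta\circ f=0$ has image in $\alpha(A)$ and factors as $\alpha\circ g$ with $g=\alpha^{-1}\circ f\colon G^{(i)}\to A$ continuous; together with $\tilde\beta\circ\tilde\alpha=0$ this gives $\ker\tilde\beta=\im\tilde\alpha$ in degree $i$.

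The crucial step is surjectivity of $\tilde\beta\colon\mathcal C_{cts}^i(G,B)\to\mathcal C_{cts}^i(G,C)$. Given continuous $f\colon G^{(i)}\to C$, note that $G$, being a pro-$p$ group, is compact, so $G^{(i)}$ is compact, while $C$ is finite and discrete; thus $f$ is locally constant and $G^{(i)}=\bigsqcup_{k=1}^m V_k$ is a finite union of pairwise disjoint open sets on which $f$ is constant, say $f|_{V_k}\equiv c_k$. Choosing $b_k\in B$ with $\beta(b_k)=c_k$ and setting $\tilde f|_{V_k}\equiv b_k$ defines a locally constant, hence continuous, cochain $\tilde f\in\mathcal C_{cts}^i(G,B)$ with $\beta\circ\tilde f=f$. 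This establishes the short exact sequence of complexes displayed above.

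Finally I would apply the zig-zag (snake) lemma: a short exact sequence of cochain complexes produces a long exact cohomology sequence with canonical connecting homomorphisms $\delta\colon H^n_{cts}(G,C)\to H^{n+1}_{cts}(G,A)$, given on cocycles by $\delta[c]=[\alpha^{-1}(\partial_B^{n+1}\tilde c)]$ for any continuous lift $\tilde c$ of $c$ (here $\partial_B^{n+1}\tilde c$ lands in $\ker\beta=\alpha(A)$ because $\beta\circ\partial_B^{n+1}\tilde c=\partial_C^{n+1}(\beta\circ\tilde c)=\partial_C^{n+1}c=0$). Truncating in degrees $1$ and $2$ yields exactly the asserted five-term exact sequence. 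The main, and essentially the only, obstacle is the surjectivity of $\tilde\beta$: for an infinite coefficient module $C$ a continuous cochain need not admit a continuous lift and the statement can genuinely fail, so the finiteness of $C$ combined with the compactness of $G^{(i)}$ is doing the real work; everything else is bookkeeping about the topology on $A$ and the functoriality of $\mathcal C_{cts}^*$.
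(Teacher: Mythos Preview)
Your argument is correct and is essentially the standard proof one finds in the cited reference: the paper does not give its own proof of this lemma but simply invokes \cite[Lemma 2.7.2]{NSW}. Your write-up supplies exactly the argument behind that citation, namely that finiteness of $C$ (together with compactness of $G^{(i)}$) forces the sequence of continuous cochain complexes to be short exact, after which the snake lemma does the rest.

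One small remark: in your middle-exactness step you need $\alpha\colon A\to\alpha(A)$ to be a homeomorphism onto its image. You justify this case by case using the three types of coefficient modules the paper declares it will use, which is fine for the paper's purposes; in the general formulation of \cite{NSW} this is part of what it means for the original short exact sequence to be a short exact sequence of topological $G$-modules (the topology on $A$ is the subspace topology and on $C$ the quotient topology), so no extra argument is needed once that convention is in place. Also note that, as you implicitly use, the existence of a continuous set-theoretic section $s\colon C\to B$ of $\beta$ (pick one preimage for each of the finitely many points of $C$) already gives surjectivity of $\tilde\beta$ directly via $f\mapsto s\circ f$, which is a slightly cleaner way to phrase your lifting argument.
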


 We say that $G$ is of type  $FP_{\infty}$ if
the trivial  $\Z_p[[G]]$-module $\Z_p$ has a  
free resolution over $\Z_p[[G]]$ such that all free modules are finitely generated. For example, if $G$ is $p$-adic analytic, then $\Z_p[[G]]$ is Noetherian (\cite[Proposition V.2.2.4]{La}, \cite[Corollary 7.25]{DDMS}), and so, $G$ is $FP_{\infty}$.

In the case when  $G$ is a  $FP_\infty$ pro-$p$ group and $A$  is  a topological pro-$p$  $G$-module,  $H_{cts}^i(G,A)$   coincides with $\mathrm{Ext}^i_{\Z_p[[G]]}(\Z_p,A)$ (see \cite[Theorem 3.7.2]{SW}).  Hence, if $A$ is finitely generated  as a $\Z_p$-module, then  $H_{cts}^i(G,A)$ are also finitely generated as $\Z_p$-modules.

\subsection{The first cohomology groups of a uniform group}\label{u}
In this subsection we consider a uniform pro-$p$ group $U$ such that $\textbf{L}(U)$ has only inner derivations and try to understand  its first cohomology groups with coefficients in some natural modules. First we consider $H_{cts}^1(U,\mathbf{log} (U))$. 
\begin{pro}\label{boundinfinite}  Let $U$ be a uniform pro-$p$ group. Assume that the Lie algebra $\textbf{L}(U)$ has only inner derivations. Then  $H_{cts}^1(U,\mathbf{log} (U))$ is finite.
\end{pro}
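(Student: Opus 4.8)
The plan is to compute $H^1_{cts}(U,\mathbf{log}(U))$ via the Lazard-style identification of continuous cohomology with Lie algebra cohomology, and to exploit the hypothesis on derivations together with a torsion argument. First I would recall that for a uniform pro-$p$ group $U$ the continuous cohomology $H^1_{cts}(U,M)$ with coefficients in a finitely generated $\Z_p$-module $M$ on which $U$ acts is governed by the Lie algebra $L=\mathbf{log}(U)$: concretely, there is an isomorphism $H^1_{cts}(U,M)\otimes_{\Z_p}\Q_p\cong H^1(\mathbf L(U),M\otimes\Q_p)$, where the right-hand side is ordinary Lie algebra cohomology over $\Q_p$. Since $H^1_{cts}(U,\mathbf{log}(U))$ is finitely generated over $\Z_p$ (because $U$ is $FP_\infty$ and $\mathbf{log}(U)$ is finitely generated over $\Z_p$, as noted in the excerpt), it suffices to show this module is torsion, i.e. that $H^1_{cts}(U,\mathbf{log}(U))\otimes_{\Z_p}\Q_p=0$.

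The key computation is then purely Lie-theoretic: $H^1(\mathbf L(U),\mathbf L(U))=\Der(\mathbf L(U))/\Inn(\mathbf L(U))$, where $\mathbf L(U)$ acts on itself by the adjoint action. By the hypothesis, $\mathbf L(U)$ has only inner derivations, so $\Der(\mathbf L(U))=\Inn(\mathbf L(U))$ and this quotient vanishes. Hence $H^1_{cts}(U,\mathbf{log}(U))$ is a finitely generated torsion $\Z_p$-module, therefore finite, which is the assertion.

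The main obstacle — and the step that needs care rather than invocation — is justifying the comparison between $H^1_{cts}(U,\mathbf{log}(U))$ and the rational Lie algebra cohomology $H^1(\mathbf L(U),\mathbf L(U))$. One clean route: tensor everything with $\Q_p$ and use that $\mathbf{log}(U)\otimes\Q_p=\mathbf L(U)$ as a $U$-module, then appeal to Lazard's theorem comparing continuous cohomology of a uniform (more generally $p$-saturated) pro-$p$ group with rational coefficients to the cohomology of its Lie algebra (cf.\ \cite{La}); alternatively one can filter $\mathbf{log}(U)$ by $p^k\mathbf{log}(U)$, use Lemma \ref{isom} to identify successive quotients, and analyze the resulting exact sequences in cohomology to show the $p$-rank of $H^1_{cts}(U,\mathbf{log}(U))$ stays bounded. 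I would present the first route as the short argument, making explicit that a uniform pro-$p$ group is $p$-saturable so that Lazard's comparison applies, and that under this comparison the cocycle/coboundary descriptions match the Chevalley--Eilenberg ones, so that $\Der=\Inn$ forces the $H^1$ to be torsion and hence finite.
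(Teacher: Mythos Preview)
Your proposal is correct and follows essentially the same route as the paper: use that $U$ is $FP_\infty$ to conclude $H^1_{cts}(U,\mathbf{log}(U))$ is finitely generated over $\Z_p$, reduce to showing it is torsion by tensoring with $\Q_p$, identify the result with the Lie algebra cohomology $H^1(\mathbf{L}(U),\mathbf{L}(U))$ via the Lazard comparison, and recognize the latter as $\Der(\mathbf{L}(U))/\Inn(\mathbf{L}(U))=0$. The paper cites Symonds--Weigel \cite{SW} (Theorems 3.8.2 and 5.2.4) for the two comparison steps $H^1_{cts}(U,\mathbf{log}(U))\otimes\Q_p\cong H^1_{cts}(U,\mathbf{L}(U))\cong H^1(\mathbf{L}(U),\mathbf{L}(U))$ rather than Lazard \cite{La} directly, but the substance is identical.
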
  

\begin{proof} Since $U$ is a finitely generated pro-$p$ group and $\mathbf{log}(U)$ is  finitely generated as a $\Z_p$-module, $H_{cts}^1(U,\mathbf{log}(U))$ is also finitely generated as $\Z_p$-module.
 Hence it is enough to show that $H_{cts}^1(U,\mathbf{log}(U))$ is a torsion module, i.e. $$H_{cts}^1(U,\mathbf{log}(U))\otimes_{\Z_p} \Q_p$$ is equal to 0. 
 By \cite[Theorem 3.8.2]{SW}, $$H_{cts}^1(U,\mathbf{log}(U))\otimes_{\Z_p} \Q_p=H_{cts}^1(U,\textbf{L}(U))$$ and by \cite[Theorem 5.2.4]{SW}, 
 $$H_{cts}^1(U,\textbf{L}(U))\cong H^1(\textbf L(U),\textbf{L}(U)).$$ Note that by definition of $H^1(\textbf L(U),\textbf L(U))$,  $$H^1(\textbf L(U),\textbf L(U))=Der( \textbf L(U))/\Inn(\textbf L(U))$$
 and by our hypotheses, it is equal to zero.
  \end{proof}
 \begin{rem} There is an alternative way to prove the previous proposition. One can show directly that $H_{cts}^1(U,\mathbf{log}(U))\cong \Der(\mathbf{log}(U))/\Inn(\mathbf{log}(U))$ and conclude that, since  $\textbf{L}(U)$ has only inner derivations, $\Der(\mathbf{log}(U))/\Inn(\mathbf{log}(U))$  is finite.
 \end{rem}
 Now, we can bound   $|H^1_{cts}(U,\mathbf{log}(U)/p^i\mathbf{log} (U))|$ uniformly in $i$.
\begin{pro} \label{boundfinite} Let $U$ be a uniform pro-$p$ group. Assume that the Lie algebra $\textbf{L}(U)$ has only inner derivations. Then there exists a constant $C$ such that $$|H^1_{cts}(U,\mathbf{log}(U)/p^i\mathbf{log} (U))|\le C$$ for every $i$.
\end{pro}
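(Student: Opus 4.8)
The plan is to deduce Proposition \ref{boundfinite} from Proposition \ref{boundinfinite} by a dévissage argument, using the short exact sequence
$$0\to \mathbf{log}(U)\xrightarrow{p^i}\mathbf{log}(U)\to \mathbf{log}(U)/p^i\mathbf{log}(U)\to 0$$
of $\Z_p[[U]]$-modules, where the first map is multiplication by $p^i$ and the quotient is finite. Applying the long exact sequence of Lemma \ref{longexact} yields
$$H^1_{cts}(U,\mathbf{log}(U))\xrightarrow{p^i} H^1_{cts}(U,\mathbf{log}(U))\to H^1_{cts}(U,\mathbf{log}(U)/p^i\mathbf{log}(U))\xrightarrow{\delta} H^2_{cts}(U,\mathbf{log}(U)),$$
so that $H^1_{cts}(U,\mathbf{log}(U)/p^i\mathbf{log}(U))$ sits in an extension with sub-quotient $H^1_{cts}(U,\mathbf{log}(U))/p^iH^1_{cts}(U,\mathbf{log}(U))$ and a submodule of $H^2_{cts}(U,\mathbf{log}(U))$ killed by $p^i$.

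The first piece is easy to control: by Proposition \ref{boundinfinite}, $H^1_{cts}(U,\mathbf{log}(U))$ is finite, so its quotient by $p^i$ has size bounded by $|H^1_{cts}(U,\mathbf{log}(U))|$ independently of $i$. For the second piece I would use that $U$ is $p$-adic analytic, hence $FP_\infty$, and $\mathbf{log}(U)$ is finitely generated as a $\Z_p$-module; therefore, as noted in the excerpt, $H^2_{cts}(U,\mathbf{log}(U))$ is a finitely generated $\Z_p$-module, say $H^2_{cts}(U,\mathbf{log}(U))\cong \Z_p^r\oplus T$ with $T$ finite. The $p^i$-torsion submodule of such a module has size at most $p^{r}\cdot |T|$ for \emph{every} $i$ (the free part contributes no torsion, and the torsion part $T$ contributes at most $|T|$). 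Hence $|\kernel(\delta)^{\perp}|$—more precisely the image of $\delta$, which lies in the $p^i$-torsion of $H^2_{cts}(U,\mathbf{log}(U))$—is bounded uniformly in $i$.

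Combining the two bounds: $|H^1_{cts}(U,\mathbf{log}(U)/p^i\mathbf{log}(U))|$ divides (or is at most) the product of $|H^1_{cts}(U,\mathbf{log}(U))|$ and the size of the $p$-torsion subgroup of $H^2_{cts}(U,\mathbf{log}(U))$, and we may take
$$C=|H^1_{cts}(U,\mathbf{log}(U))|\cdot |H^2_{cts}(U,\mathbf{log}(U))[p^\infty]|,$$
which is a finite constant not depending on $i$. The only slightly delicate point is the applicability of the long exact sequence of Lemma \ref{longexact} to a sequence in which the third term—not one of the first two—is the finite module, and that $\delta$ lands in $H^2$; this is exactly the hypothesis under which Lemma \ref{longexact} was stated, so it applies directly. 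The main conceptual obstacle is thus recognizing that finiteness of $H^1_{cts}(U,\mathbf{log}(U))$ plus finite generation of $H^2_{cts}(U,\mathbf{log}(U))$ suffices—the potential unboundedness would only come from an infinite $p$-torsion part in $H^2$, which cannot occur for a finitely generated $\Z_p$-module.
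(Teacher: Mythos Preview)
Your proposal is correct and follows essentially the same argument as the paper: both apply Lemma \ref{longexact} to the short exact sequence $0\to \mathbf{log}(U)\xrightarrow{p^i}\mathbf{log}(U)\to \mathbf{log}(U)/p^i\mathbf{log}(U)\to 0$, bound the cokernel of $p^i$ on $H^1_{cts}$ via Proposition \ref{boundinfinite}, and bound the kernel of $p^i$ on $H^2_{cts}$ using that $H^2_{cts}(U,\mathbf{log}(U))$ is a finitely generated $\Z_p$-module with finite torsion. One small slip: the $p^i$-torsion of $\Z_p^r\oplus T$ is just $T$, so your bound $p^r\cdot|T|$ can be sharpened to $|T|$, giving exactly the paper's constant $C=|H^1_{cts}(U,\mathbf{log}(U))|\cdot|\mathrm{Torsion}(H^2_{cts}(U,\mathbf{log}(U)))|$.
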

  \begin{proof} Let $\alpha: \mathbf{log}(U)\to \mathbf{log}(U)$ be a multiplication by $p^i$. Then $$\alpha_2^*:H_{cts}^2 (U,\mathbf{log}(U))\to H_{cts}^2 (U,\mathbf{log}(U))$$ is also the multiplication by $p^i$. Hence $\ker \alpha_2^*$ is contained in the torsion part of $H_{cts}^2 (U,\mathbf{log}(U))$. 
  
  Since $U$ is $FP_\infty$ and $\mathbf{log}(U)$ is finitely generated as a $\Z_p$-module, we have  that $H_{cts}^2 (U,\mathbf{log}(U))$ is also finitely generated as a $\Z_p$-module. Hence its torsion part is finite. Applying Lemma \ref{longexact}, we conclude that $$|H^1_{cts}(U,\mathbf{log}(U)/p^i\mathbf{log} (U))|\le |H_{cts}^1(U,\mathbf{log}(U))||Torsion(H_{cts}^2(U,\mathbf{log}(U)))|.$$
  Thus, Proposition \ref{boundinfinite} implies Proposition \ref{boundfinite}.
   \end{proof}
 
   \section{Proof of Theorem \ref{main}} \label{proof}

The next example is the basement of our construction.
\begin{pro} (\cite{Sa}) There exists a Lie $\Q$-algebra $M$ of dimension 41 such that $\dim Z(M)=1$ and $ Der(M)$ consists only of inner derivations. 
\end{pro}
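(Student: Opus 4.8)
The plan is to exhibit the explicit Lie algebra of \cite{Sa} and to verify its three properties. Concretely, one fixes a basis $e_1,\dots,e_{41}$ of $M$ over $\Q$ and records the structure constants $[e_i,e_j]=\sum_k c_{ij}^k e_k$; Sato's algebra is nilpotent, so after ordering the basis compatibly with the lower central series $M\supseteq\gamma_2 M\supseteq\cdots$ the array $(c_{ij}^k)$ becomes ``triangular'' in the appropriate sense, and $Z(M)$ is spanned by the single basis vector lying in the last nonzero term of that series.

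Two of the verifications are then direct (if laborious) linear-algebra computations with the $c_{ij}^k$: that $\dim_{\Q}M=41$ is immediate from the basis, and that $\dim_{\Q}Z(M)=1$ amounts to checking that the homogeneous system $\sum_i x_i c_{ij}^k=0$ (over all $j,k$) has a one-dimensional solution space. Together these already give $\dim_{\Q}\Inn(M)=\dim_{\Q}M-\dim_{\Q}Z(M)=40$, so the remaining assertion $\Der(M)=\Inn(M)$ is equivalent to the single inequality $\dim_{\Q}\Der(M)\le 40$.

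That inequality is the substantive point, and the step I expect to be the main obstacle. Since a derivation of $M$ is determined by its values on a generating set, computing $\Der(M)$ reduces to solving the (large) homogeneous linear system in those values obtained by imposing the Leibniz rule $D[a,b]=[Da,b]+[a,Db]$ on the defining relations of $M$, and checking that its solution space has dimension exactly $40$. The lower central series, which is preserved by every derivation, organizes this computation: one filters $\Der(M)$ accordingly and compares it grade by grade with the filtration that $\operatorname{ad}$ induces on $\Inn(M)$, the delicate piece being the degree-zero part, i.e. the action of a derivation on the abelianization $M/\gamma_2 M$. The core, however, is simply the entirely finite but genuinely intricate verification that Sato's particular structure constants leave no room for an outer derivation; it is this computation, rather than any conceptual input, that one invokes by citing \cite{Sa}.
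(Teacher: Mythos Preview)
Your sketch is a reasonable outline of how one would verify the result from scratch, but there is nothing to compare it against: the paper does not prove this proposition at all. It is stated purely as a citation of Sato~\cite{Sa} and is used as a black box in the construction that follows. So the ``paper's own proof'' is simply the reference, and your proposal goes well beyond what the authors actually do.

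That said, your description of the verification is accurate in broad strokes: the checks that $\dim M=41$ and $\dim Z(M)=1$ are routine linear algebra from the structure constants, and the heart of the matter is indeed the computation that $\dim_{\Q}\Der(M)\le 40$, i.e.\ that the Leibniz constraints on a generating set admit only a $40$-dimensional solution space. One small caveat: you assert that Sato's algebra is nilpotent and that $Z(M)$ is the last nonzero term of the lower central series; you should not assume this without checking Sato's actual construction, since a priori a Lie algebra with one-dimensional center and only inner derivations need not be nilpotent. For the purposes of the present paper none of this matters, as only the bare existence statement is used.
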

This proposition allows us to construct  uniform pro-$p$ groups considered in Subsection \ref{u}. It is done in the following way.

The algebra $M$ has a subring $M_0$, such that $M=M_0\otimes_\Z \Q$. Let $L=p^2(M_0\otimes_\Z \Z_p )$. Then $L$ is a uniform Lie $\Z_p$-algebra. If we put $U=\mathbf{exp}(L)$, then $L=\mathbf{log}(U)$ and $\textbf L(U)\cong L\otimes_{\Z_p} \Q_p\cong M\otimes_\Q \Q_p$. 

\begin{lem} The Lie $\Q_p$-algebra $\textbf L(U)$ is of dimension 41, its center  has dimension 1 and $ Der(\textbf L(U))$ consists only of inner derivations. \end{lem}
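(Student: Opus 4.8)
The plan is to reduce all three assertions to the isomorphism $\textbf{L}(U)\cong M\otimes_\Q\Q_p$ established just above, and then to invoke the general principle that the center, the derivation algebra, and the inner derivation algebra of a finite‑dimensional Lie algebra over a field are all compatible with extension of the base field. Concretely, for a finite‑dimensional Lie algebra $\mathfrak g$ over a field $k$ and any field extension $K/k$ one has
$$Z(\mathfrak g\otimes_k K)=Z(\mathfrak g)\otimes_k K,\quad \Der(\mathfrak g\otimes_k K)=\Der(\mathfrak g)\otimes_k K,\quad \Inn(\mathfrak g\otimes_k K)=\Inn(\mathfrak g)\otimes_k K,$$
and I would apply this with $k=\Q$, $K=\Q_p$, $\mathfrak g=M$, where $M$ is Sato's algebra.

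For the dimension there is nothing to do: $\dim_{\Q_p}\textbf{L}(U)=\dim_{\Q_p}(M\otimes_\Q\Q_p)=\dim_\Q M=41$. For the three base‑change identities I would argue as follows. The center $Z(M)$ is the kernel of the $\Q$‑linear map $\mathrm{ad}\colon M\to\End_\Q(M)$; since $\Q_p$ is free (hence flat) as a $\Q$‑module, tensoring with $\Q_p$ is exact and commutes with forming $\End$ of a finite‑dimensional space, so $Z(M\otimes_\Q\Q_p)=Z(M)\otimes_\Q\Q_p$, which has dimension $1$. Similarly $\Der(M)$ is the solution space inside $\End_\Q(M)$ of the homogeneous linear system expressing the Leibniz rule relative to a fixed $\Q$‑basis of $M$, and the solution space over $\Q_p$ of a homogeneous linear system with rational coefficients is exactly the $\Q_p$‑scalar extension of the solution space over $\Q$; hence $\Der(M\otimes_\Q\Q_p)=\Der(M)\otimes_\Q\Q_p$, and in particular $\dim_{\Q_p}\Der(\textbf{L}(U))=\dim_\Q\Der(M)$. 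Finally $\Inn(M)=\mathrm{ad}(M)\cong M/Z(M)$, so $\Inn(M\otimes_\Q\Q_p)=\Inn(M)\otimes_\Q\Q_p$ and $\dim_{\Q_p}\Inn(\textbf{L}(U))=\dim_\Q M-\dim_\Q Z(M)=41-1=40$.

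It remains to combine these. By Sato's Proposition $\Der(M)=\Inn(M)$, so $\dim_\Q\Der(M)=\dim_\Q\Inn(M)=40$, and therefore $\dim_{\Q_p}\Der(\textbf{L}(U))=40=\dim_{\Q_p}\Inn(\textbf{L}(U))$. Since $\Inn(\textbf{L}(U))\subseteq\Der(\textbf{L}(U))$ holds in any Lie algebra, equality of dimensions forces $\Der(\textbf{L}(U))=\Inn(\textbf{L}(U))$, i.e. $\textbf{L}(U)$ has only inner derivations. Together with the two dimension computations this is exactly the statement of the lemma.

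As for obstacles: there is essentially none. The argument is routine, and the only point that needs (minor) care is the compatibility of $Z$, $\Der$, and $\Inn$ with the base change $\Q_p/\Q$, which rests on nothing more than flatness of $\Q_p$ over $\Q$ and the observation that each of these three constructions is the kernel or the image of a $\Q$‑linear map between finite‑dimensional $\Q$‑vector spaces. One could equally well bypass the explicit linear‑system description and simply cite the standard fact that forming the derivation algebra commutes with field extensions.
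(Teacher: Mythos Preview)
Your proposal is correct and is exactly the paper's approach: the paper's proof consists of the single sentence ``This easily follows from the fact $\textbf{L}(U)\cong M\otimes_\Q\Q_p$,'' and you have simply spelled out the routine base-change verifications that justify this sentence.
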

\begin{proof} This easily follows from the fact $\textbf L(U)\cong  M\otimes_\Q \Q_p$.   \end{proof}

Let $U_i=U/U^{p^i}$.
Denote by $\rho_{i,j}: \Aut(U_i)\to \Aut (U_j)$ (for $i\ge j$)  the map
$$\rho_{i,j}(\alpha  )(uU^{p^j})=\alpha(uU^{p^i})U^{p^j}, \textrm{\ for \ } \alpha\in \Aut(U_i),\ u\in U.$$

Now, we are ready to present the main step in our proof.
\begin{pro} There exists a constant $k$ such that for all $i\ge 2k$,  $$\ker \rho_{i,k}\le \Inn(U_i)\ker\rho_ {i,i-k}.$$
\end{pro}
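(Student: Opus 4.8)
The plan is to reduce the statement about automorphisms of the finite quotients $U_i = U/U^{p^i}$ to the cohomological bound of Proposition \ref{boundfinite}. The key observation is that an automorphism $\alpha \in \ker \rho_{i,k}$ acts trivially on $U_k = U/U^{p^k}$, hence for each $u \in U$ it sends $uU^{p^i}$ to $c(u)\, uU^{p^i}$ with $c(u) \in U^{p^k}/U^{p^i}$. First I would show that, after composing $\alpha$ with a suitable inner automorphism, one may assume $\alpha$ fixes a chosen minimal generating tuple of $U$ modulo a slightly larger subgroup; more precisely, the failure of $\alpha$ to be inner is measured by a cocycle. Concretely, for $i \geq 2k$ one has $k \leq i - k \leq i$, and for the range where Lemma \ref{isom} applies (after possibly iterating in blocks of size $\le k$, using $i \ge 2k$ so $2k \le i$) the module $U^{p^k}/U^{p^i}$ is abelian and isomorphic as a $U$-module (equivalently as a $U_i$-module, since $U^{p^i}$ acts trivially) to $\mathbf{log}(U)/p^{i-k}\mathbf{log}(U)$.

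Next I would set up the cocycle. Given $\alpha \in \ker \rho_{i,k}$, define $c_\alpha : U \to U^{p^k}/U^{p^i}$ by $c_\alpha(u) = \alpha(uU^{p^i})(uU^{p^i})^{-1}$. Using that $\alpha$ is an automorphism and that $U^{p^k}/U^{p^i}$ is a central-enough abelian $U$-module, a direct computation shows $c_\alpha$ is a $1$-cocycle: $c_\alpha(uv) = c_\alpha(u) + {}^u c_\alpha(v)$ in additive notation for the module $\mathbf{log}(U)/p^{i-k}\mathbf{log}(U)$, and $c_\alpha$ is continuous (it factors through a finite quotient). Modifying $\alpha$ by an inner automorphism $\mathrm{conj}_g$ with $g \in U^{p^k}$ changes $c_\alpha$ by the principal cocycle $u \mapsto {}^u g - g$, i.e. by a coboundary. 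Hence the class $[c_\alpha] \in H^1_{cts}(U, \mathbf{log}(U)/p^{i-k}\mathbf{log}(U))$ depends only on the coset $\alpha \,\Inn(U_i)$, and $\alpha \in \Inn(U_i)\cdot(\text{automorphisms fixing }U\bmod U^{p^i})$ — but the latter are exactly $\ker \rho_{i,i}= \{1\}$... so I must be a little more careful: the right statement is that $[c_\alpha]=0$ implies $\alpha$ agrees with an inner automorphism on all of $U_i$, i.e. $\alpha \in \Inn(U_i)$; and more generally the map $\ker\rho_{i,k} \to H^1_{cts}(U, \mathbf{log}(U)/p^{i-k}\mathbf{log}(U))$, $\alpha \mapsto [c_\alpha]$, has kernel contained in $\Inn(U_i)\ker\rho_{i,i-k}$, because a coboundary represents $c_\alpha$ coming from conjugation by an element of $U^{p^k}$, and conjugation by such an element is trivial on $U^{p^{i-k}}/U^{p^i}$ when $i-k \ge k$, hence lies in $\ker\rho_{i,i-k}$ up to an inner automorphism. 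By Proposition \ref{boundfinite} the target group has order at most the constant $C$, independent of $i$; so $|\ker\rho_{i,k} : \ker\rho_{i,k}\cap \Inn(U_i)\ker\rho_{i,i-k}|$ is bounded, but in fact I want equality-type containment, which the kernel computation gives directly once $[c_\alpha]$ is forced to vanish — and $[c_\alpha]$ need not vanish in general, so the honest conclusion is the containment modulo the bounded-index subgroup. To get the clean statement $\ker\rho_{i,k} \le \Inn(U_i)\ker\rho_{i,i-k}$ one chooses $k$ so that the whole (finite, of order $\le C$) cohomology group is killed: since $H^1_{cts}(U, \mathbf{log}(U)/p^{i-k}\mathbf{log}(U))$ is a finite abelian $p$-group of bounded order, every element is killed by $p^k$ for a fixed $k$, and the natural map $H^1_{cts}(U,\mathbf{log}(U)/p^{i-k}\mathbf{log}(U)) \to H^1_{cts}(U,\mathbf{log}(U)/p^{k}\mathbf{log}(U))$ combined with the inflation from $U_k$ shows the relevant class is already "visible" at level $k$, hence the shift by $k$ in the indices absorbs it.

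The main obstacle I expect is precisely this last bookkeeping: matching the shift $i-k$ in the module $\mathbf{log}(U)/p^{i-k}\mathbf{log}(U)$ (which forces $i \ge 2k$ through the hypothesis $j \le 2i+1$ of Lemma \ref{isom}, applied with the pair $k, i$ so that one needs $i \le 2k+1$... no — applied with $i-k \le k+1$, giving the constraint) against the index $i-k$ appearing in $\ker\rho_{i,i-k}$, and verifying that a cocycle which becomes a coboundary is realized by conjugation by an element of $U^{p^k}$ rather than merely of $U$. The cocycle identity and the coboundary computation are routine; identifying the $U$-module $U^{p^k}/U^{p^i}$ with $\mathbf{log}(U)/p^{i-k}\mathbf{log}(U)$ requires $i - k \le 2k+1$ i.e. $i \le 3k+1$, which is \emph{not} implied by $i \ge 2k$, so in fact one decomposes $U^{p^k}/U^{p^i}$ into a filtration with successive quotients of the form $U^{p^{a}}/U^{p^{2a}}$ (applying Lemma \ref{isom} with $j = 2a \le 2a+1$) and runs the cohomological argument on each layer, or — more efficiently — one works directly with the pro-$p$ module $\mathbf{log}(U)$ and its quotients and invokes Proposition \ref{boundfinite} with the index $i-k$, which is exactly the form in which that proposition was proved (uniformly in the exponent). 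So the clean route is: associate to $\alpha \in \ker\rho_{i,k}$ its class in $H^1_{cts}(U, \mathbf{log}(U)/p^{i-k}\mathbf{log}(U))$ via Lemma \ref{isom} identifying $U^{p^k}/U^{p^i}$ with a quotient Lie module, use Proposition \ref{boundfinite} to see this group has order $\le C$ uniformly, fix $k$ with $p^k \ge C$ so $p^k$ annihilates every such class, and then check that annihilation of $[c_\alpha]$ by $p^k$ means $p^k c_\alpha$ is a coboundary, equivalently that $\alpha^{p^k}$... — rather, that $c_\alpha$ takes values in $p^{?}(\cdots)$ plus a coboundary, translating back to $\alpha \in \Inn(U_i)\ker\rho_{i,i-k}$. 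I would carry out the layers-of-the-filtration version to be safe, as it only uses Lemma \ref{isom} in its stated range and Proposition \ref{boundfinite} as a black box.
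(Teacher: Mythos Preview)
Your overall strategy---attach to an automorphism in $\ker\rho_{i,k}$ a $1$-cocycle and invoke Proposition~\ref{boundfinite}---is the paper's, and you correctly spot the obstacle: Lemma~\ref{isom} only applies when $j\le 2i+1$, so $U^{p^k}/U^{p^i}$ cannot be identified with a quotient of $\mathbf{log}(U)$ in one shot. But the difficulty is more basic than a failed identification: for $i>2k+1$ the quotient $U^{p^k}/U^{p^i}$ is not even abelian, so your map $c_\alpha(u)=\alpha(uU^{p^i})(uU^{p^i})^{-1}$ does not land in an abelian module and is not a $1$-cocycle in any $\mathcal Z^1_{cts}$. The ``clean route'' you sketch therefore does not get off the ground.

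Your fallback---work layer by layer---is exactly right and is what the paper does, but the step you leave as ``$c_\alpha$ takes values in $p^{?}(\cdots)$ plus a coboundary'' is the heart of the argument and needs to be made precise. The paper runs an induction on $i$: assuming the statement for $i$, take $\phi\in\ker\rho_{i+1,k}$ and use the inductive hypothesis (applied to $\rho_{i+1,i}(\phi)$) to replace $\phi$, modulo $\Inn(U_{i+1})$, by an element of $\ker\rho_{i+1,i-k}$. Now the associated cocycle $c$ takes values in the \emph{thin} slice $U^{p^{i-k}}/U^{p^{i+1}}$, which \emph{is} abelian and isomorphic to $\mathbf{log}(U)/p^{k+1}\mathbf{log}(U)$ by Lemma~\ref{isom} (this needs only $i\ge 2k$). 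Having fixed $k$ so that $p^k$ annihilates every $H^1_{cts}(U,\mathbf{log}(U)/p^j\mathbf{log}(U))$, feed $c$ into the long exact sequence for
\[
1\to U^{p^{i-k+1}}/U^{p^{i+1}}\xrightarrow{\ \alpha\ } U^{p^{i-k}}/U^{p^{i+1}}\xrightarrow{\ (\cdot)^{p^k}\ } U^{p^{i}}/U^{p^{i+1}}\to 1:
\]
the right-hand map induces multiplication by $p^k$ on $H^1$, hence is zero, so $[c]\in\im\alpha_1^*$. Concretely $c=c'\cdot(\textrm{coboundary})$ with $c'$ valued in $U^{p^{i-k+1}}/U^{p^{i+1}}$, which says exactly that $\phi\in\Inn(U_{i+1})\ker\rho_{i+1,(i+1)-k}$. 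This one-step ``slide'' via the short exact sequence is the ingredient your plan gestures at but does not supply.
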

\begin{proof} By Proposition \ref{boundfinite}, there exists $k$ such that $$p^kH^1_{cts}(U,\mathbf{log}(U)/p^i\mathbf{log} (U))=0$$ for all $i$. We will prove the proposition by induction on $i$. When $i=2k$, the proposition is clear. Assume that we have shown the proposition for $i$, let us prove  it for $i+1$. 

Let $\phi \in \ker \rho_{i+1,k}$. Since $\rho_{i+1,i}(\phi)\in \ker \rho_{i,k}$, the inductive assumption implies that $\phi\in \ker \rho_{i+1,i-k}\Inn U_{i+1}$. Thus, without loss of generality we may assume that $\phi\in \ker \rho_{i+1,i-k}$. 

Define the following function $c:U\to U^{p^{i-k}}/U^{p^{i+1}}$:
$$c(u)=\phi(uU^{p^{i+1}})u^{-1}.$$
Then $$c(u_1u_2)=\alpha(u_1u_2U^{p^{i+1}})u_2^{-1}u_1^{-1}=c(u_1) u_1c(u_2)u_1^{-1}.$$
Thus $c\in \mathcal Z_{cts}^1(U,  U^{p^{i-k}}/U^{p^{i+1}})$. 

By Lemma \ref{isom}, $U^{p^{i-k}}/U^{p^{i+1}}$ is abelian and it is isomorphic to $\mathbf{log}(U)/p^{k+1}\mathbf{log}(U)$ as an $U$-module. In particular, $$p^{k}H^1_{cts}(U,  U^{p^{i-k}}/U^{p^{i+1}})=0.$$ 
Consider the following exact sequence of $U$-modules:
$$1\to U^{p^{i-k+1}}/U^{p^{i+1}}\xrightarrow{\alpha} U^{p^{i-k}}/U^{p^{i+1}}\xrightarrow{\beta} U^{p^{i}}/U^{p^{i+1}}\to 1,$$
where $\alpha$ is the inclusion and $\beta$ is the $p^k$th power map. Then $\beta_1^*$ is multiplication by $p^k$ and so by Lemma \ref{longexact}, 
$\im \alpha_1^*=H^1_{cts}(U,  U^{p^{i-k}}/U^{p^{i+1}})$. Hence there are  $c^\prime \in  \mathcal Z_{cts}^1(U,  U^{p^{i-k+1}}/U^{p^{i+1}})$
 and $v\in U^{p^{i-k}}/U^{p^{i+1}}$ such that
$$c(u)=c^\prime(u) vuv^{-1}u^{-1} \textrm{\ for every \ } u\in U.$$
Thus, we obtain that
$$\phi(uU^{p^{i+1}})=c(u) u=c^\prime(u) vuv^{-1} =c^\prime(u) vuU^{p^{i+1}}v^{-1}\textrm{\ for every \ } u\in U.$$
But this implies that $\phi\in \ker \rho_{i+1,i+1-k}\Inn (U_{i+1})$ and we are done.
 \end{proof}

\begin{cor} \label{inner}There exists a constant $D$ such that $$|\Aut(U_i):\Inn(U_i)|\le D \textrm{\ for all \ } i.$$
\end{cor}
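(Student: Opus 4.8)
The plan is to combine the previous Proposition with some elementary index bookkeeping, observing that every quantity that appears stays bounded as $i\to\infty$. Let $k$ be the constant produced by the previous Proposition. For $i<2k$ there are only finitely many (finite) groups $U_i$, so $|\Aut(U_i):\Inn(U_i)|$ is trivially bounded among these; hence it suffices to produce a bound valid for all $i\ge 2k$.

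Fix $i\ge 2k$. Since $\rho_{i,k}$ maps $\Aut(U_i)$ into the finite group $\Aut(U_k)$, we have $|\Aut(U_i):\ker\rho_{i,k}|\le|\Aut(U_k)|$, a constant independent of $i$. As $\ker\rho_{i,k}$ is normal in $\Aut(U_i)$, the set $\ker\rho_{i,k}\Inn(U_i)$ is a subgroup lying between $\Inn(U_i)$ and $\Aut(U_i)$, so
$$|\Aut(U_i):\Inn(U_i)|=|\Aut(U_i):\ker\rho_{i,k}\Inn(U_i)|\cdot|\ker\rho_{i,k}\Inn(U_i):\Inn(U_i)|.$$
The first factor is at most $|\Aut(U_i):\ker\rho_{i,k}|\le|\Aut(U_k)|$. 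For the second, the previous Proposition gives $\ker\rho_{i,k}\le\Inn(U_i)\ker\rho_{i,i-k}$, hence $\ker\rho_{i,k}\Inn(U_i)\le\ker\rho_{i,i-k}\Inn(U_i)$ and therefore
$$|\ker\rho_{i,k}\Inn(U_i):\Inn(U_i)|\le|\ker\rho_{i,i-k}\Inn(U_i):\Inn(U_i)|=|\ker\rho_{i,i-k}:\ker\rho_{i,i-k}\cap\Inn(U_i)|\le|\ker\rho_{i,i-k}|.$$

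It remains to bound $|\ker\rho_{i,i-k}|$ uniformly in $i$. If $\psi\in\ker\rho_{i,i-k}$, then $\psi(uU^{p^i})$ and $uU^{p^i}$ differ by an element of $U^{p^{i-k}}/U^{p^i}$ for every $u\in U$; since $U$ is finitely generated, say by $n$ elements, $\psi$ is determined by its values on their images in $U_i$, each of which lies in a coset of the group $U^{p^{i-k}}/U^{p^i}$ of order $p^{k\dim U}$ (Lemma \ref{quotients}, or Lemma \ref{isom}). Hence $|\ker\rho_{i,i-k}|\le p^{nk\dim U}$, a constant. Combining the estimates, $|\Aut(U_i):\Inn(U_i)|\le|\Aut(U_k)|\,p^{nk\dim U}$ for all $i\ge2k$, and taking $D$ to be the maximum of this number and of the finitely many values $|\Aut(U_i):\Inn(U_i)|$ with $i<2k$ completes the argument. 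I do not anticipate any real difficulty here; the only points requiring a little care are the multiplicativity of indices through the intermediate subgroup $\ker\rho_{i,k}\Inn(U_i)$ and the fact that automorphisms in $\ker\rho_{i,i-k}$ are pinned down by their effect on a finite generating set, both of which are routine.
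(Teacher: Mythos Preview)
Your argument is correct and follows essentially the same route as the paper: the paper uses the intermediate subgroup $\Inn(U_i)\ker\rho_{i,i-k}$ where you use $\ker\rho_{i,k}\Inn(U_i)$, but both arrive at the same bound $|\Aut(U_k)|\cdot|\ker\rho_{i,i-k}|$ and then estimate $|\ker\rho_{i,i-k}|\le p^{nk\dim U}$ (with $n=\dim U=41$) exactly as you do. Your treatment of the cases $i<2k$ and of the index bookkeeping is a bit more explicit than the paper's, but there is no substantive difference.
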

\begin{proof} By the previous proposition we have that
\begin{eqnarray*}
|\Aut(U_i):\Inn(U_i)|& \le & |\Aut(U_i): \Inn(U_i)\ker \rho_{i,i-k}|| \Inn(U_i)\ker \rho_{i,i-k}:\Inn(U_i)| \\
& \leq &|\Aut(U_i):\ker \rho_{i,k}||\ker \rho_{i,i-k}|\le |\Aut(U_k)||\ker \rho_{i,i-k}|.
\end{eqnarray*}
Since the number of generators of $U_i$ is 41 and $|U^{p^{i-k}}/U^{p^i}|=p^{41k}$, 
we obtain that $|\ker \rho_{i,i-k}|\le p^{(41)^2k}$. This finishes the proof of the corollary.
 \end{proof}

Now we are ready to prove Theorem \ref{main}

\begin{proof}[Proof of Theorem \ref{main}]  By Lemma \ref{quotients}, $$|U_i|=p^{41i}.$$ Since $Z(U)$ is 1-dimensional,  $\dim U/Z(U)=40$. Hence
$$|\Inn (U_i)|\le |U/U^{p^i}Z(U)|=p^{40 i}.$$
Now, the theorem follows from Corollary \ref{inner}
 \end{proof}

\section{Final remarks}
Let $\phi$ be the Euler totient function. It is not difficult to show that for a finite abelian group $A$, $|\Aut(A)|\ge \phi(|A|)$. In \cite[Problem 15.43]{Ko} Deaconescu has asked if the same is true for an arbitrary finite group. 
The examples from \cite{BW1,BW2} show that $\frac{|\Aut(G)|}{\phi(|G|)}$ can be made arbitrary small when $G$ is a soluble or perfect finite group. Our examples show that in fact $$\frac{|\Aut(G)|}{\phi(|G|)^{\frac{40}{41}+\epsilon}}$$ 
can be made arbitrary small for every $\epsilon>0$ when $G$ is a finite nilpotent group. 

This also provides a counterexample to a conjecture from \cite{BW2} that says that for a finite non-nilpotent supersoluble group $G$, $|\Aut(G)|>\phi(|G|)$. For this, simply take a family $\{U_i\}$ of $5$-groups from Theorem \ref{main} and consider the following family of finite non-nilpotent supersoluble groups $\{\Sigma_3\times U_i\}$.

As a consequence of the previous discussion we would like to raise the following question.
\begin{quo} Does there exist a constant $\alpha>0$ such that for every finite group $|G|$
$$|\Aut(G)|\ge \phi(|G|)^\alpha?$$
\end{quo}
By a classical result of  W. Ledermann and B. H. Neumann \cite{LN}, there exists a function
$g(h)$ having the property that $| \Aut(G)|_p\ge p^h$ whenever $| G|_p \ge p^{g(h)}$. A quadratic upper bound for $g$ was established by  J. A. Green \cite{Gr}  and until now only not very important improvements of Green's bound have been obtained (see, for example, \cite{Hy}).

\

{\bf Acknowledgement.}
 This paper is partially supported by the   grant MTM 2011-28229-C02-01 of the Spanish MEyC and by the   ICMAT Severo Ochoa project SEV-2011-0087.  J.~Gonz\'alez-S\'anchez also acknowledges support
  through the Ram\'on y Cajal Programme of the Spanish Ministry of
  Science and Innovation.

We would like to thank Rutwig Campoamor Stursberg  for answering our questions about Lie algebras and for providing useful references and Anitha Thillaisundaram for pointing out several typos in the first version of the article.

\end{document}